\tikzstyle{vertex}=[circle, draw, inner sep=0pt, minimum size=6pt]
\newcommand{\vertex}{\node[vertex]}
\newtheorem{theorem}{Theorem}
\newtheorem{lemma}{Lemma}
\newtheorem{proposition}{Proposition}  
\newtheorem{corollary}{Corollary}
\newtheorem{definition}{Definition}
\newtheorem{example}{Example}
\newcommand{\vol}{\mathrm{vol}}
\newcommand{\p}{\mathbb{P}}
\title{Graphs with many strong orientations}
\author{
Sinan Aksoy \thanks{Research supported in part by NSF grant DMS-1427526, ``The Rocky Mountain - Great Plains Graduate Research Workshop in Combinatorics".} \thanks{Department of Mathematics, University of California, San Diego, {\tt saksoy@ucsd.edu}.}  \and Paul Horn \footnotemark[1] \thanks{Department of Mathematics, University of Denver, {\tt paul.horn@du.edu}.}
  }
\begin{document}

\maketitle

\begin{abstract}
We establish mild conditions under which a possibly irregular, sparse graph $G$ has ``many" strong orientations. Given a graph $G$ on $n$ vertices, orient each edge in either direction with probability $1/2$ independently. We show that if $G$ satisfies a minimum degree condition of $(1+c_1)\log_2{n}$ and has Cheeger constant at least $c_2\frac{\log_2\log_2{n}}{\log_2{n}}$, then the resulting randomly oriented directed graph is strongly connected with high probability. This Cheeger constant bound can be replaced by an analogous spectral condition via the Cheeger inequality. Additionally, we provide an explicit construction to show our minimum degree condition is tight while the Cheeger constant bound is tight up to a $\log_2\log_2{n}$ factor. \\

\noindent \textbf{Key words.} strong orientations, strongly connected, Cheeger constant \\

\noindent \textbf{AMS subject classifications.} 05C20, 05C40, 05C50
\end{abstract}

\section{Introduction}

Given an undirected graph, the classic Robbins' theorem \cite{robbins} provides a simple criterion to determine whether there exists an orientation of its edges yielding a strongly connected digraph. Namely, graphs admitting a {\it strong orientation} are $2$-edge connected graphs. This theorem has since been extended to multigraphs \cite{tindell}; moreover, there are linear time algorithms which detect strong orientations and construct them whenever possible \cite{tarjan}. While the existence and construction of strong orientations are well-understood topics, the task of {\em counting} strong orientations is less straightforward.

Counting strong orientations has natural applications. One example comes from statistical mechanics, where the Eulerian orientations (i.e. strong orientations for which each vertex has equal in and out-degree) of a $4$-regular graph are the allowable configurations in two-dimensional {\em ice-type} models used to study crystals with hydrogen bonds \cite{ice}. More generally, the problem of counting the number of Eulerian orientations and strong orientations of a given graph $G$ are special cases of evaluating its Tutte polynomial, $T(G; x,y)$. Specifically, the number of Eulerian orientations of a given graph $G$ is $T(G;0,-2)$ and number of strong orientations is $T(G;0,2)$ \cite{vergnas}.  

Counting Eulerian and strong orientations has been shown to be $\#P$-hard (see \cite{winkler} and \cite{vertigan} respectively). Instead of exact counting, researchers have approximated the number of strong orientations for particular classes of graphs. In the case of $\alpha$-dense graphs $G$ (i.e. graphs with minimum degree $\delta(G)>\alpha n$ for $0<\alpha<1$), Alon, Frieze, and Welsh \cite{fpras} developed a fully polynomial randomized approximation scheme for counting strong orientations. 

In this paper, we investigate a different type of question related to counting strong orientations: for which possibly sparse and irregular graphs are ``most" orientations strongly connected? In particular, we show that if a general graph $G$ satisfies a mild isoperimetric condition and mild minimum degree requirement, then a random orientation will be strongly connected with high probability. Our main result is as follows:

\begin{theorem} \label{mainThm}
Given any $\alpha > 0$ and $\xi > 4$, there exists an integer $N_0 = N_0(\alpha, \xi)$ such that for $n \geq N_0$, if $G$ is an $n$-vertex graph with minimum degree $\delta(G) \geq (1+\alpha)\log_2{n}$ and Cheeger constant $\displaystyle \Phi(G)>\xi \cdot \frac{\log_2\log_2 n}{\log_2 n}$, then a random orientation of $G$ is strongly connected with probability 
at least $\displaystyle 1-\frac{1+4\alpha \log_2{n}}{\alpha n^\alpha \log_2{n}}.$   
\end{theorem}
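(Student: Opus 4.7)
My plan is to prove the theorem via a union bound over all proper nonempty subsets $S\subsetneq V$. A random orientation fails to be strongly connected precisely when some such $S$ has every edge of the cut $E(S,\bar S)$ oriented from $\bar S$ into $S$; by independence of the coin flips, this event has probability $2^{-|E(S,\bar S)|}$. Writing $t = |S|$ and exploiting the symmetry $S \leftrightarrow \bar S$, the task reduces to bounding
\[
2\sum_{t=1}^{\lfloor n/2 \rfloor}\ \sum_{|S|=t} 2^{-|E(S,\bar S)|} \ \leq\ \frac{1+4\alpha\log_2 n}{\alpha n^\alpha \log_2 n}.
\]

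I would split the outer sum by $t$ into a \emph{small}-$t$ regime handled by the minimum degree hypothesis and a \emph{large}-$t$ regime handled by the Cheeger hypothesis. For small $t$, every vertex in $S$ has at most $t-1$ neighbors inside $S$, so $|E(S,\bar S)| \geq t\delta - t(t-1) \geq t((1+\alpha)\log_2 n - t + 1)$. Combined with $\binom{n}{t} \leq (en/t)^t$, the inner sum is bounded by $(e/t)^t\, n^{-t\alpha}\, 2^{t(t-1)}$, which decays rapidly once $t$ is chosen well below $\alpha\log_2 n$; the $t=1$ term contributes exactly $n\cdot 2^{-\delta} \leq n^{-\alpha}$, which is the dominant term and matches the advertised leading order. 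For large $t$, the Cheeger bound $|E(S,\bar S)|\geq \Phi(G)\,\min(\vol(S), \vol(\bar S)) \geq \Phi(G)\cdot\delta\cdot t = \xi(1+\alpha)\, t\log_2\log_2 n$ gives the inner-sum bound $\bigl(\tfrac{en}{t(\log_2 n)^{\xi(1+\alpha)}}\bigr)^t$, which with $\xi>4$ becomes at most $(\log_2 n)^{-t}$ once $t$ exceeds a threshold of order $n/(\log_2 n)^{\xi(1+\alpha)-1}$, giving a geometric tail that is negligible compared to $n^{-\alpha}$.

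The main obstacle I anticipate is the \emph{middle} range of $t$: once $t > \delta$ the elementary degree bound becomes vacuous, but $t$ may still be far below the threshold where the Cheeger bound by itself overpowers the combinatorial factor $\binom{n}{t}\approx(en/t)^t$. In this range I would strengthen the cut lower bound by combining both tools, exploiting that for unbalanced $S$ one has $|E(S,\bar S)|\geq \Phi\vol(S)$ where $\vol(S)$ can exceed $\delta t$ when $S$ contains higher-degree vertices, or by grouping subsets by cut size and using that the Cheeger bound forces any $S$ with small cut to also have small cardinality. The hypothesis $\xi>4$ enters precisely here, providing the needed slack in the exponent $\xi(1+\alpha)$ to close the middle gap. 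Once these three regimes are spliced together with careful accounting, the stated bound $(1+4\alpha\log_2 n)/(\alpha n^\alpha\log_2 n)$ should emerge, with the $4/n^\alpha$ piece absorbing the $t=1$ contribution (including the symmetry factor) and the $1/(\alpha n^\alpha\log_2 n)$ piece arising from the geometric tail bounds on larger $t$.
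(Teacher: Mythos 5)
Your proposed union bound ranges over \emph{all} proper subsets $S$ of size $t$, contributing the combinatorial factor $\binom{n}{t}$; this cannot close the middle range of $t$, and the remedies you sketch do not resolve the issue. Concretely, take $t = n^{0.9}$. Since $\binom{n}{t} \geq (n/t)^t$, we have $\log_2\binom{n}{t} \geq 0.1\,t\log_2 n$. On the other hand, in the worst case $S$ consists entirely of minimum-degree vertices, so the Cheeger bound gives only $e(S,\bar S) \geq \Phi\,\vol(S) \geq \Phi\delta t > \xi(1+\alpha)\,t\log_2\log_2 n$, while the minimum-degree bound $t\delta - t(t-1)$ has long since become vacuous. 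Thus the log of your inner-sum bound is at least
\[
t\Bigl(0.1\log_2 n - \xi(1+\alpha)\log_2\log_2 n\Bigr) \longrightarrow +\infty,
\]
so the bound diverges rather than decaying. The same blowup happens for every $t$ from roughly $\alpha\log_2 n$ up to roughly $n/\mathrm{polylog}(n)$, which is essentially the whole intermediate range. Neither of your proposed fixes helps: the worst case has $\vol(S) = \delta t$ exactly, so ``$\vol(S)$ can exceed $\delta t$'' provides no extra leverage, and grouping sets by cut size still leaves you paying $\binom{n}{t}$ per cardinality class, which is exactly the factor that is too large.

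The missing ingredient is that one may restrict attention to subsets that are \emph{connected} in $G$ and have $\vol(X) \leq \vol(G)/2$: if a random orientation is not strongly connected, some offending set $Y$ has all cut edges oriented one way, and any connected component $W$ of whichever of $Y,\bar Y$ has smaller volume inherits the same property, with $\vol(W) \leq \vol(G)/2$. Restricting to connected sets replaces the $\binom{n}{t}$ factor by an enumeration of rooted spanning trees (via their exposure sequences $\pi$), which costs only $n \cdot c_{k-1}$ (a Catalan number) times a product $\prod_j \binom{\deg(v_j)}{\pi_j}$ over the tree's vertices. The crucial trade-off is that each $\binom{\deg(v_j)}{\pi_j}$ is absorbed, one vertex at a time, into the corresponding factor $2^{-\Phi\deg(v_j)}$ coming from $\p(\mathcal{B}_X) \leq 2^{1-\Phi\vol(X)}$: a high-degree vertex offers more choices of children but also a proportionally larger probability discount. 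This termwise cancellation is what makes the estimate work uniformly across the intermediate regime and is the heart of the paper's argument. Your small-$t$ regime and your observation that the $t=1$ term produces the leading $\Theta(n^{-\alpha})$ contribution are both correct, but without the restriction to connected sets the remainder of your plan cannot be completed with the tools you list.
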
 

Thus, a graph $G$ satisfying the conditions of Theorem $\ref{mainThm}$ has $(1-o(1))2^{e(G)}$ many strong orientations, where $e(G)$ denotes the number of edges of $G$. While we will provide a formal definition in Section $\ref{tools}$, we note that the Cheeger constant of a graph measures the fewest number of edges leaving a vertex set relative to the ``size" of that set. Beyond the bound on the Cheeger constant and the minimum degree requirement, we do not assume the graph necessarily satisfies additional structural properties; in particular, the graph is {\em not} assumed to be regular. Not assuming regularity increases the utility of the result, but introduces additional subtleties in the proof, particularly with regard to enumerating connected $k$ sets of the graph.

As we will show in Section $\ref{discussion}$, the minimum degree requirement is tight while the bound on the Cheeger constant is tight up to a $\log_2\log_2{n}$ factor. Since the normalized Laplacian eigenvalues of a general graph can be more efficiently computed than its Cheeger constant, it may be useful to reformulate the second condition in Theorem $\ref{mainThm}$ as a spectral condition via the Cheeger inequality.

\begin{corollary} \label{mainCor}
In Theorem $\ref{mainThm}$, the condition $\Phi(G)>\xi \cdot \frac{\log_2\log_2 n}{\log_2 n}$  may be replaced with
\[
\frac{\lambda_1(G)}{2}		 > \xi \cdot \frac{\log_2\log_2 n}{\log_2 n},
\]
\noindent where $\lambda_1{(G)}$ denotes the second eigenvalue of the normalized Laplacian of $G$.
\end{corollary}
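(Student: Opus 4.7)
The plan is to deduce the corollary directly from Theorem~\ref{mainThm} by invoking the easy direction of the Cheeger inequality. Recall that for the normalized Laplacian of a graph $G$, the standard Cheeger inequality states
\[
\frac{\lambda_1(G)}{2} \leq \Phi(G) \leq \sqrt{2\lambda_1(G)},
\]
so in particular the lower bound $\Phi(G) \geq \lambda_1(G)/2$ always holds. This is the only ingredient I expect to need; the depth of the corollary lies entirely in Theorem~\ref{mainThm}.

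First, I would state (or cite, presumably from the preliminaries in Section~\ref{tools} where $\Phi(G)$ and $\lambda_1(G)$ are formally introduced) the Cheeger inequality in the form $\Phi(G) \geq \lambda_1(G)/2$. Then, under the hypothesis of the corollary, I would chain inequalities: if
\[
\frac{\lambda_1(G)}{2} > \xi \cdot \frac{\log_2\log_2 n}{\log_2 n},
\]
then $\Phi(G) \geq \lambda_1(G)/2 > \xi \cdot \frac{\log_2\log_2 n}{\log_2 n}$, so the Cheeger hypothesis of Theorem~\ref{mainThm} is satisfied. Combined with the unchanged minimum degree hypothesis $\delta(G) \geq (1+\alpha)\log_2 n$, Theorem~\ref{mainThm} applies verbatim and yields the same probability bound.

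There is no real obstacle here: the only thing to be careful about is a normalization convention, namely whether the authors define $\lambda_1$ as the second eigenvalue of the normalized Laplacian $\mathcal{L} = I - D^{-1/2} A D^{-1/2}$ (in which case $\Phi \geq \lambda_1/2$ is the standard form) or of the combinatorial Laplacian (in which case a different inequality applies). Since the corollary is written with the factor of $2$ in the denominator, the statement is consistent with the normalized Laplacian version, which is what the paper's notation $\lambda_1(G)$ indicates. Thus the corollary follows immediately once Theorem~\ref{mainThm} is in hand.
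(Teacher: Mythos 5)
Your proposal is correct and matches the paper's own argument: the corollary is obtained by invoking only the easy direction $\Phi(G) \geq \lambda_1(G)/2$ of Cheeger's inequality, which converts the spectral hypothesis into the Cheeger hypothesis of Theorem~\ref{mainThm}. The paper even remarks on precisely this point, noting that the corollary uses only $2\Phi \geq \lambda_1$ and not the full strength of Cheeger's inequality.
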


The paper is organized as follows: In Section $\ref{tools}$, we establish our main tools and introduce relevant notation. In Section $\ref{discussion}$ we briefly discuss the isoperimetric condition and minimum degree requirement in Theorem $\ref{mainThm}$. In Section $\ref{proof}$, we present the proof of Theorem $\ref{mainThm}$.

\section{Main Tools and Notation} \label{tools}

For a vertex subset $X \subseteq V(G)$, $\vol(X):= \sum_{v\in X}\deg(v)$ and $e(X,\overline{X})$ denotes the number of edges between $X$ and its complement $\overline{X}$. The {\em Cheeger ratio} of $X$ is
\[
\Phi(X) = \frac{e(X,\overline{X})}{\min(\vol(X), \vol(\overline{X}))},
\] 

\noindent and the {\em Cheeger constant} of the graph $G$ is $\Phi(G) = \min_{X \subseteq V(G)} \Phi(X)$. When the graph $G$ in question is clear from context, we will simply denote its Cheeger constant $\Phi(G)$ as $\Phi$ and its minimum degree $\delta(G)$ as $\delta$. \\

The {\em normalized Laplacian} of a graph $G$ is
\[
\mathscr{L}=I-D^{-1/2}AD^{-1/2},
\]

\noindent where $A$ is the adjacency matrix and $D$ is the diagonal degree matrix. The eigenvalues of $\mathscr{L}$ are labeled in increasing order:
\[
0=\lambda_0\leq\lambda_1\leq\dots\leq\lambda_{n-1}\leq2.
\]

Here, we emphasize that the spectral condition in Corollary $\ref{mainCor}$ only pertains to the second eigenvalue and thus makes no additional assumptions about the {\em spectral gap}, $\sigma=\max_{i\geq1}|1-\lambda_i|$, which is the key parameter in controlling the {\em discrepancy} of a graph. Thus, while we assume a bound on $|1-\lambda_1|$, we do not assume an additional bound on the other end of the spectrum, $|1-\lambda_{n-1}|$, beyond the trivial bound that holds for any graph, $\lambda_{n-1}\leq2$.

The well-known Cheeger inequality describes the relationship between the normalized Laplacian eigenvalues of a graph and its Cheeger constant:

\begin{theorem}[Cheeger Inequality \cite{fan}]
Let $\Phi$ be the Cheeger constant of a graph $G$ and $\lambda_1$ the second eigenvalue of the normalized Laplacian. Then: 

\[
2\Phi \geq \lambda_1 \geq \frac{\Phi^2 }{2}.
\]
\end{theorem}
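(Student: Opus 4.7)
My plan is to prove both inequalities via the variational characterization of $\lambda_1$. Substituting $f = D^{1/2} g$ into the standard Rayleigh quotient $\langle f, \mathscr{L} f \rangle / \langle f, f \rangle$ yields
\[
\lambda_1 = \min_{\substack{g \colon V(G) \to \mathbb{R} \\ \sum_v g(v) d(v) = 0}} \frac{\sum_{uv \in E(G)} (g(u) - g(v))^2}{\sum_{v \in V(G)} g(v)^2 d(v)},
\]
after observing that the trivial eigenvector $D^{1/2}\mathbf{1}$ for $\lambda_0 = 0$ becomes the constant vector $\mathbf{1}$ under this change of variables, so the orthogonality $f \perp D^{1/2}\mathbf{1}$ translates into the condition $\sum_v g(v) d(v) = 0$.

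For the easy direction $\lambda_1 \leq 2\Phi$, I would plug a suitable test function into this quotient. Pick $S \subseteq V(G)$ achieving $\Phi(S) = \Phi$, with $\vol(S) \leq \vol(\overline{S})$ (swap otherwise), and define $g(v) = \vol(\overline{S})$ for $v \in S$ and $g(v) = -\vol(S)$ for $v \in \overline{S}$; the weights are precisely those that enforce $\sum_v g(v) d(v) = 0$. A direct computation shows the numerator equals $e(S,\overline{S}) \cdot \vol(G)^2$ and the denominator equals $\vol(S)\vol(\overline{S}) \cdot \vol(G)$, so the Rayleigh quotient collapses to $e(S,\overline{S}) \vol(G) / (\vol(S)\vol(\overline{S}))$, which is at most $2\Phi$ since $\vol(\overline{S}) \geq \vol(G)/2$.

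For the harder lower bound $\lambda_1 \geq \Phi^2/2$, I would apply a ``sweep'' or level-set argument to a minimizer $g$ of the Rayleigh quotient. Because $\sum_v g(v) d(v) = 0$, the function $g$ must change sign, and I may assume $\vol(\{v : g(v) > 0\}) \leq \vol(G)/2$. Replacing $g$ by its positive part $g_+(v) = \max(g(v), 0)$ only decreases the Rayleigh quotient, as can be checked by splitting the numerator and denominator sums across the zero set. For each threshold $t \geq 0$, define $S_t = \{v : g_+(v)^2 > t\}$; by construction $\vol(S_t) \leq \vol(G)/2$, so $e(S_t, \overline{S_t}) \geq \Phi \, \vol(S_t)$. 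Integrating in $t$ via the standard ``coarea'' identities gives
\[
\sum_{uv \in E(G)} \bigl|g_+(u)^2 - g_+(v)^2\bigr| \;\geq\; \Phi \sum_{v \in V(G)} g_+(v)^2 d(v).
\]
Factoring $|g_+(u)^2 - g_+(v)^2| = |g_+(u) - g_+(v)| \cdot (g_+(u) + g_+(v))$, applying Cauchy--Schwarz, and invoking the crude bound $(g_+(u) + g_+(v))^2 \leq 2(g_+(u)^2 + g_+(v)^2)$, I would square and rearrange to extract $\lambda_1 \geq \Phi^2 / 2$.

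The main obstacle is the one-sided reduction to $g_+$ that opens the lower-bound argument. Without first discarding the mass of $g$ on the larger-volume side, the sweep sets $S_t$ need not satisfy $\vol(S_t) \leq \vol(\overline{S_t})$, and the defining inequality $e(S_t, \overline{S_t}) \geq \Phi \, \vol(S_t)$ could fail, breaking the coupling between the isoperimetric constant and the Dirichlet form. Once $g_+$ is in place, the coarea integration and the Cauchy--Schwarz step are mechanical, and the factor $\tfrac{1}{2}$ appears precisely from the inequality $(a+b)^2 \leq 2(a^2 + b^2)$.
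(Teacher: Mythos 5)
The paper states this theorem and cites Chung's \emph{Spectral Graph Theory} \cite{fan} without proof, and in fact only ever uses the easy direction $\lambda_1 \leq 2\Phi$ via~(\ref{eq:cheeger}). So you are being measured against the standard textbook argument, and your proposal follows that template. Your upper bound is correct: the test function, the orthogonality check, and the evaluation of the quotient all go through. The coarea identity and the Cauchy--Schwarz step in the lower bound are also standard and correct once you have a nonnegative function, supported on a set of volume at most $\tfrac{1}{2}\vol(G)$, whose Rayleigh quotient is at most $\lambda_1$.

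The gap is in how you get that function. You assert that passing from the minimizer $g$ to $g_+ = \max(g,0)$ ``only decreases the Rayleigh quotient, as can be checked by splitting the numerator and denominator sums across the zero set.'' Splitting shows that the numerator and the denominator \emph{both} decrease, which does not control the ratio. What it actually yields, via the mediant inequality $\frac{a+b}{c+d} \geq \min(a/c,b/d)$ used elsewhere in this paper, is only $R(g) \geq \min\bigl(R(g_+),R(g_-)\bigr)$, and a sign flip alone does not guarantee that the minimum is attained by the part supported on a set of volume at most $\tfrac{1}{2}\vol(G)$. In fact $R(g_+) \leq R(g)$ is \emph{false} for a generic $g$ with $\sum_v g(v)d(v)=0$: take two triangles joined by one edge with $g$ oscillating on the positive side and constant (negative) on the other, and $R(g_+)$ comes out strictly larger. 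Two standard repairs: (i) shift $g$ to a weighted median $c$ before truncating, so that both $\{g>c\}$ and $\{g<c\}$ have volume at most $\tfrac{1}{2}\vol(G)$; the shift cannot increase the quotient since the numerator is translation-invariant while orthogonality gives $\sum_v (g(v)-c)^2 d(v) \geq \sum_v g(v)^2 d(v)$, and then the mediant bound lets you run your sweep on whichever half realizes the minimum. Or (ii) use that the minimizer satisfies the harmonic eigenvalue equation $\sum_{u\sim v}(g(v)-g(u)) = \lambda_1 d(v) g(v)$; multiply by $g_+(v)$, sum over $v$ with $g(v)>0$, and use $g_+(u) \geq g(u)$ pointwise to get $\sum_{uv}(g_+(u)-g_+(v))^2 \leq \lambda_1 \sum_v g_+(v)^2 d(v)$ directly. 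Either repair lets your sweep close the argument and give $\lambda_1 \geq \Phi^2/2$.
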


A consequence of this, by which Corollary $\ref{mainCor}$  follows immediately from Theorem $\ref{mainThm}$, is that for any set $X \subseteq V(G)$ with $\vol(X) \leq \frac{1}{2}\vol(G)$, 
\begin{equation}
e(X,\overline{X}) \geq \frac{\lambda_1}{2} \vol(X).  \label{eq:cheeger}
\end{equation}
As an aside, this uses only the bound $2\Phi \geq \lambda_1$, so we are not using the full strength of Cheeger's inequality.  Indeed, on graphs the lower bound $\lambda_1 \leq 2\Phi$ is easily proven (for instance, see Lemma 2.1 in \cite{fan}).  In the Riemannian manifold case,  Cheeger's inequality only refers to the lower bound on $\lambda_1$ in terms of $\Phi$ -- the upper bound on $\lambda_1$ in terms of $\Phi$ is Buser's inequality \cite{buser}. Nonetheless, we stick with the convention in graph theory and refer to $(\ref{eq:cheeger})$ as following from Cheeger's inequality.  

Finally, we will utilize standard asymptotic notation: we say a function $f(n)=O(g(n))$ if for all sufficiently large values of $n$ there exists a positive constant $c$ such that $|f(n)|\leq c\cdot |g(n)|$; similarly, we write $f(n)=\Omega(g(n))$ if $g(n)=O(f(n))$, and $f(n)=\Theta(g(n))$ if both  $f(n)=O(g(n))$ and $f(n)=\Omega(g(n))$. Lastly, $f(n)=o(g(n))$ if $\lim_{n\rightarrow \infty}\frac{f(n)}{g(n)}=0$.
 
\section{Discussion of Theorem $\ref{mainThm}$} \label{discussion}

Before we proceed with the proof of Theorem $\ref{mainThm}$, we briefly discuss the minimum degree requirement and Cheeger constant bound. First, we show that each of these conditions, taken on their own, do not ensure that a random orientation of a graph yields a strongly connected directed graph with {\em any} nonzero limiting probability. For instance, Figure~\ref{fig:bridgeMinDegree} illustrates the so-called {\em barbell graph} on $n$ vertices, which has minimum degree a factor of $n$ but possesses a bridge. Similarly, the graph obtained by connecting a single vertex to exactly one vertex of $K_{n-1}$ has Cheeger constant always at least $1/2$ but again contains a bridge. Thus, neither condition in Theorem $\ref{mainThm}$ is sufficient in ensuring the result. 

Next, we show our minimum degree requirement is sharp while the bound on the Cheeger constant is sharp up to a $\log_2\log_2{n}$ factor.  In order to do this, we will make use of the fact that if $G$ is a random $d$-regular graph, for $d = c \log_2 n$, then $G$ has a Cheeger constant bounded away from zero.  Such results were known for fixed $d$ dating to the work of Bollob\'as \cite{bollobas}.  

For non-constant degree, as in our case, the easiest approach to such a result is to appeal to the spectra.  The study of spectra of random regular graphs has a long history, culminating most famously in Friedman's proof of Alon's second eigenvalue conjecture \cite{fried}: random regular graphs of fixed degree $d$ have second eigenvalue of the adjacency matrix $2\sqrt{d-1} + \epsilon$ for any $\epsilon > 0$, with high probability.  This, again unfortunately for our work, focuses on the case with constant degree.  Fortunately for our purposes, Broder, Frieze, Suen and Upfal \cite{BFSU} showed that the technique used by Kahn and Szemeredi in \cite{KS} works in the case that $d = o(n^{1/2})$, and shows that the second eigenvalue of the adjacency matrix is $O(\sqrt{d})$ for such graphs.  In terms of normalized Laplacian eigenvalues, this shows that $\lambda_{1} \geq 1 - O(d^{-1/2})$ in this regime, and through Cheeger's inequality random $d$-regular graphs have Cheeger constant satisfying $\Phi > \frac{1}{4}$ with high probability so long as $n$ is sufficiently large.  We mention that this problem is still attracting attention, as just recently, Cook, Goldstein and Johnson \cite{CGJ} proved that the second adjacency eigenvalue for a random $d$-regular graph is still $O(\sqrt{d})$ for $d=o(n^{2/3})$.

We now use the fact that a $\log_2 n$ regular graph has Cheeger constant at least $1/4$ with high probability when considering the following example, which shows our minimum degree requirement is sharp. 

\begin{figure}[t]
\[\begin{tikzpicture}
	\vertex (c1) at (45:1){};
	\vertex (c2) at (90:1){};
	\vertex (c3) at (135:1){};
	\vertex (c4) at (180:1){};
	\vertex(c5) at (225:1){};
	\vertex(c6) at (270:1){};
	\vertex(c7) at (315:1){};
	\vertex(c8) at (360:1){};
	\vertex (d1) at (3.71,.71){};
	\vertex (d2) at (3,1){};
	\vertex (d3) at (2.29,.71){};
	\vertex (d4) at (2,0){};
	\vertex(d5) at (2.29,-.71){};
	\vertex(d6) at (3,-1){};
	\vertex(d7) at (3.71,-.71){};
	\vertex(d8) at (4,0){};
	\path 
		(c1) edge (c2)
		(c1) edge (c3)
		(c1) edge (c4)
		(c1) edge (c5)
		(c1) edge (c6)
		(c1) edge (c7)
		(c1) edge (c8)
		(c2) edge (c1)
		(c2) edge (c3)
		(c2) edge (c4)
		(c2) edge (c5)
		(c2) edge (c6)
		(c2) edge (c7)
		(c2) edge (c8)
		(c3) edge (c2)
		(c3) edge (c1)
		(c3) edge (c4)
		(c3) edge (c5)
		(c3) edge (c6)
		(c3) edge (c7)
		(c3) edge (c8)
		(c4) edge (c2)
		(c4) edge (c3)
		(c4) edge (c1)
		(c4) edge (c5)
		(c4) edge (c6)
		(c4) edge (c7)
		(c4) edge (c8)
		(c5) edge (c2)
		(c5) edge (c3)
		(c5) edge (c4)
		(c5) edge (c1)
		(c5) edge (c6)
		(c5) edge (c7)
		(c5) edge (c8)
		(c6) edge (c2)
		(c6) edge (c3)
		(c6) edge (c4)
		(c6) edge (c5)
		(c6) edge (c1)
		(c6) edge (c7)
		(c6) edge (c8)
		(c7) edge (c2)
		(c7) edge (c3)
		(c7) edge (c4)
		(c7) edge (c5)
		(c7) edge (c6)
		(c7) edge (c1)
		(c7) edge (c8)
		(c8) edge (c2)
		(c8) edge (c3)
		(c8) edge (c4)
		(c8) edge (c5)
		(c8) edge (c6)
		(c8) edge (c1)
		(c8) edge (c7)
		(d1) edge (d2)
		(d1) edge (d3)
		(d1) edge (d4)
		(d1) edge (d5)
		(d1) edge (d6)
		(d1) edge (d7)
		(d1) edge (d8)
		(d2) edge (d1)
		(d2) edge (d3)
		(d2) edge (d4)
		(d2) edge (d5)
		(d2) edge (d6)
		(d2) edge (d7)
		(d2) edge (d8)
		(d3) edge (d2)
		(d3) edge (d1)
		(d3) edge (d4)
		(d3) edge (d5)
		(d3) edge (d6)
		(d3) edge (d7)
		(d3) edge (d8)
		(d4) edge (d2)
		(d4) edge (d3)
		(d4) edge (d1)
		(d4) edge (d5)
		(d4) edge (d6)
		(d4) edge (d7)
		(d4) edge (d8)
		(d5) edge (d2)
		(d5) edge (d3)
		(d5) edge (d4)
		(d5) edge (d1)
		(d5) edge (d6)
		(d5) edge (d7)
		(d5) edge (d8)
	         (d6) edge (d2)
		(d6) edge (d3)
		(d6) edge (d4)
		(d6) edge (d5)
		(d6) edge (d1)
		(d6) edge (d7)
		(d6) edge (d8)
		(d7) edge (d2)
		(d7) edge (d3)
		(d7) edge (d4)
		(d7) edge (d5)
		(d7) edge (d6)
		(d7) edge (d1)
		(d7) edge (d8)
		(d8) edge (d2)
		(d8) edge (d3)
		(d8) edge (d4)
		(d8) edge (d5)
		(d8) edge (d6)
		(d8) edge (d1)
		(d8) edge (d7)
		(d4) edge (c8)

	;
\end{tikzpicture}\]
\caption{Two copies of $K_{n/2}$ connected by an edge.}
\label{fig:bridgeMinDegree}
\end{figure}
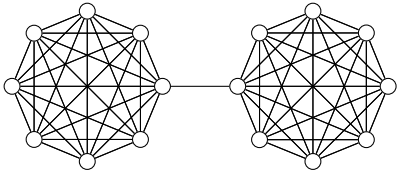

\begin{example} \label{ex: g'}
Let $G'$ be a random $t$ regular graph on $N=2^{t}$ vertices.
\end{example}

\begin{proposition} \label{prop: g'}
$G'$ has minimum degree $\log_2{N}$ and, with high probability, Cheeger constant at least $\frac{1}{4}$. However, a random orientation of $G'$ is disconnected with limiting probability at least $1-\frac{1}{e}$.
\end{proposition}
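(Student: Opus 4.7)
The minimum degree claim is immediate: by construction $G'$ is $t$-regular with $t = \log_2 N$, so $\delta(G') = \log_2 N$. The Cheeger constant bound of $\frac{1}{4}$ (with high probability) is exactly the consequence of the Broder--Frieze--Suen--Upfal theorem and the Cheeger inequality summarized in the paragraph preceding Example~\ref{ex: g'}: since $t = o(N^{1/2})$ their result gives $\lambda_1(G') \geq 1 - O(t^{-1/2})$, and hence $\Phi(G') \geq \lambda_1/2 \geq \frac{1}{4}$ once $N$ is large.

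The substantive part is the disconnection statement. My plan is to count \emph{sink} vertices (those whose every incident edge is oriented inward) and show that the number of sinks is asymptotically $\mathrm{Poisson}(1)$; since any digraph containing a sink is not strongly connected, this immediately gives $\mathbb{P}(\text{not strongly connected}) \geq \mathbb{P}(\text{a sink exists}) \to 1 - e^{-1}$. Let $X$ count the sinks. Each vertex has exactly $t$ incident edges, so $\mathbb{P}(v \text{ is a sink}) = 2^{-t} = 1/N$ and $\mathbb{E}[X] = 1$. The structural observation driving everything is: if $v,w$ are adjacent they cannot both be sinks (the edge $vw$ would have to point toward both endpoints at once), whereas if $S = \{v_1,\ldots,v_k\}$ is independent then the edge-sets incident to distinct $v_i$ are pairwise disjoint, so the sink events are mutually independent and $\mathbb{P}(\text{every vertex of }S\text{ is a sink}) = 2^{-kt} = N^{-k}$.

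Consequently the $k$-th factorial moment equals
\[
\mathbb{E}\bigl[X(X-1)\cdots(X-k+1)\bigr] = k!\,I_k(G')\,N^{-k},
\]
where $I_k(G')$ is the number of independent $k$-sets in $G'$. Since $G'$ has only $Nt/2$ edges, $\binom{N}{k} - I_k(G') \leq \tfrac{Nt}{2}\binom{N-2}{k-2}$, so $I_k/\binom{N}{k} \to 1$ and thus $k!\,I_k/N^k \to 1 = 1^{k}$ for every fixed $k$, using $t = \log_2 N = o(N)$. The standard factorial moment method then yields $X \to \mathrm{Poisson}(1)$ in distribution, hence $\mathbb{P}(X=0) \to e^{-1}$ and the required $\mathbb{P}(\text{disconnected}) \geq 1 - e^{-1} - o(1)$ follows. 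I anticipate no real obstacle -- only a mild bookkeeping point worth noting: the entire sink computation depends only on $G'$ being $t$-regular with $N = 2^t$ vertices and holds deterministically over the random orientation for \emph{every} realization of $G'$, so the random-graph structure is used solely to produce the Cheeger bound.
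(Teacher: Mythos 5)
Your proof is correct and takes essentially the same approach as the paper's: both count sink vertices, use that sink events are mutually independent on independent sets and impossible on adjacent pairs, and show the counting sums converge to $1/k!$ so that the number of sinks is asymptotically $\mathrm{Poisson}(1)$. The paper phrases this as an application of Brun's sieve while you phrase it via factorial moments, but these are the same tool ($S^{(k)}$ is exactly the $k$-th factorial moment divided by $k!$), and your bound on $\binom{N}{k}-I_k$ via counting edges is just a slight repackaging of the paper's greedy lower bound on independent $k$-sets.
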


\begin{proof}
We show a random orientation of $G'$ is disconnected with limiting probability at least $1-\frac{1}{e}$. Since $G'$ is $\log_2{N}$ regular, the probability a vertex is a sink in a random orientation is $\frac{1}{N}$. Assume the vertices are labeled and let $B_i$ denote the event that vertex $i$ is a sink. For fixed $k$, define

\begin{align*}
S^{(k)}=\sum_{\{i_1,\dots,i_k \} \in {V(G') \choose k}} \p(B_{i_1} \cap \dots \cap B_{i_k} ).\nonumber
\end{align*}

By Brun's sieve \cite[Theorem~8.3.1]{alonspencer}, if we show that for every fixed $k$

\[
\lim_{N \to \infty} S^{(k)}= \frac{1}{k!},
\]
then the limiting probability there are no sinks in a random orientation of $G'$ is $\frac{1}{e}$. Note that if $\{i,j\} \in E(G')$, then $\p(B_i \cap B_j)=0$. Thus, we may rewrite the sum for $S^{(k)}$ as over all independent sets with $k$ vertices.  Accordingly, since we need each of the $t=\log_2{N}$ edges for each of the $k$ vertices to be oriented so that each is a sink, $\p(B_{i_1} \cap \dots \cap B_{i_k} )=\left(\frac{1}{2^t} \right)^k=\frac{1}{N^k}$. At most, every $k$-subset of $V(G')$ is an independent set, yielding the upper bound

\begin{align*}
S^{(k)}&\leq {N \choose k} \frac{1}{N^k} \sim  \frac{1}{k!},
\end{align*}
and at least, there are $\frac{1}{k!}\cdot N(N-\log_2{N})\dots(N-(k-1)\log_2{N})\geq \frac{(N-k \log_2{N})^k}{k! \cdot N^k}$ independent sets of size $k$, yielding the lower bound

\begin{align*}
S^{(k)}&\geq \frac{(1-\frac{k\log_2{N}}{N})^k\cdot N^k}{k! \cdot N^k}\sim \frac{1}{k!}.
\end{align*}
\end{proof}

Having shown that the minimum degree condition in Theorem $\ref{mainThm}$ is sharp, we now use $G'$ to construct a graph $G$ to show the Cheeger constant condition in Theorem $\ref{mainThm}$ is sharp up to a $\log_2\log_2{n}$ factor.

\begin{example}\label{ex: g}

For any integer $c > 1$, consider the graph $G$ on $n$ vertices obtained from $G'$ by appending to each vertex in $G'$ vertex disjoint  complete graphs on $ct$ vertices.  (Equivalently, $G$ is constructed by taking $N$ vertex disjoint cliques on $ct$ vertices, selecting from each of them a distinguished vertex, and amongst the $N$ distinguished vertices, placing a $t$ regular random graph). See Figure $\ref{fig:CheegerExample}$ for one example of this construction.
\end{example}

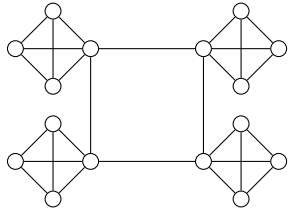
\begin{figure}[t]
\[\begin{tikzpicture}
	\vertex (c1) at (0,0) {};
	\vertex (c2) at (1.5,0) {};
	\vertex (c3) at (0,1.5) {};
	\vertex (c4) at (1.5,1.5) {};
	\vertex (c31) at (-1,1.5) {};
	\vertex (c32) at (-0.5,2) {};
	\vertex(c33) at (-0.5, 1.0) {};
	\vertex (c41) at (2.5,1.5) {};
	\vertex (c42) at (2,1) {};
	\vertex(c43) at (2, 2) {};
	\vertex (c11) at (-1,0) {};
	\vertex (c12) at (-0.5,0.5) {};
	\vertex(c13) at (-0.5, -0.5) {};
	\vertex (c21) at (2.5,0) {};
	\vertex (c22) at (2,-0.5) {};
	\vertex(c23) at (2, 0.5) {};
	\path 
		(c1) edge (c2)
		(c2) edge (c4)
		(c4) edge (c3)
		(c1) edge (c3)
		(c3) edge (c31)
		(c3) edge (c32)
		(c3) edge (c33)
		(c31) edge (c32)
		(c31) edge (c33)
		(c32) edge (c33)
		(c4) edge (c41)
		(c4) edge (c42)
		(c4) edge (c43)
		(c41) edge (c42)
		(c41) edge (c43)
		(c42) edge (c43)
		(c1) edge (c11)
		(c1) edge (c12)
		(c1) edge (c13)
		(c11) edge (c12)
		(c11) edge (c13)
		(c12) edge (c13)
		(c2) edge (c21)
		(c2) edge (c22)
		(c2) edge (c23)
		(c21) edge (c22)
		(c21) edge (c23)
		(c22) edge (c23)
	;
\end{tikzpicture}\]
\caption{The graph $G$ in Example $\ref{ex: g}$ with $t=c=2$.}
\label{fig:CheegerExample}
\end{figure}
  
\begin{proposition} \label{prop: g}
$G$ has minimum degree $\Omega(\log_2{n})$ and Cheeger constant $\Phi(G)=\Omega(\log_2^{-1}{n})$. However, a random orientation of $G$ is disconnected with limiting probability at least $1-\frac{1}{e}$.  
\end{proposition}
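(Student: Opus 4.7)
The plan is to verify the three claims separately. Since $n = N \cdot ct = 2^{t}\cdot ct$, we have $t = \Theta(\log_2 n)$ and $ct = \Theta(\log_2 n)$ throughout. The minimum degree claim is immediate: every non-distinguished vertex of a clique $K_i$ has degree $ct - 1$ while the distinguished vertex has degree $ct - 1 + t$, so $\delta(G) = ct - 1 = \Omega(\log_2 n)$. For the disconnectedness claim, I would mirror the Brun's sieve argument of Proposition \ref{prop: g'}, but with the event $B_i$ redefined to be that the $t$ edges of $G'$ incident to the distinguished vertex of $K_i$ are all oriented into that vertex. Under $B_i$ no arc leaves $K_i$, so the orientation is not strongly connected; and $\p(B_i) = 2^{-t} = 1/N$. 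For $\{i_1, \dots, i_k\}$ independent in $G'$ these events involve disjoint sets of edges, giving $\p(B_{i_1} \cap \cdots \cap B_{i_k}) = 1/N^k$, after which the independent-set counting of Proposition \ref{prop: g'} yields $S^{(k)} \to 1/k!$ and Brun's sieve delivers the $1 - 1/e$ disconnectedness bound.

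For the Cheeger constant lower bound, fix $S \subseteq V(G)$ with $\vol(S) \leq \vol(G)/2$, set $s_i = |S \cap K_i|$, and call $K_i$ \emph{full} if $s_i = ct$ and \emph{split} if $0 < s_i < ct$; let $n_{\text{full}}, n_{\text{split}}$ be their counts and let $V_{\text{full}}, V_{\text{split}}$ be the respective contributions to $\vol(S)$. Since each full clique contributes $\Theta(c^2 t^2)$ to volume and $\vol(G) = \Theta(N c^2 t^2)$, the assumption $\vol(S) \leq \vol(G)/2$ forces $n_{\text{full}} \leq N/2$. Within each split clique the internal cut edges number $s_i(ct - s_i) \geq ct - 1$, and an elementary calculation shows this is always at least an $\Omega(1/t)$ fraction of the clique's contribution to $\vol(S)$ (the worst case being $s_i = ct - 1$); summing, the internal edges from split cliques already contribute at least $\Omega(V_{\text{split}}/t)$ to $e(S, \overline{S})$, absorbing $V_{\text{split}}$.

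To absorb $V_{\text{full}} = \Theta(n_{\text{full}} t^2)$ I split into two subcases. If $n_{\text{split}} \geq n_{\text{full}}/8$, the internal split-clique cut edges already number $\Omega(n_{\text{split}} t) = \Omega(n_{\text{full}} t) = \Omega(V_{\text{full}}/t)$ and we are done. Otherwise, I appeal to Proposition \ref{prop: g'}, which gives $\Phi(G') \geq 1/4$: applied to the set $F \subseteq V(G')$ of distinguished vertices of full cliques (so $|F| = n_{\text{full}} \leq N/2$ and $\vol_{G'}(F) = n_{\text{full}} t$), it yields $e_{G'}(F, V(G') \setminus F) \geq n_{\text{full}} t / 4$. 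At most $n_{\text{split}} t < n_{\text{full}} t / 8$ of these edges can have their other endpoint inside $S$, namely at a distinguished vertex of a split clique that happens to lie in $S$, so at least $n_{\text{full}} t / 8$ of them cross the $G$-cut, furnishing $\Omega(V_{\text{full}}/t)$ cut edges. Combining the two contributions, $e(S, \overline{S}) \geq \Omega(\vol(S)/t) = \Omega(\vol(S)/\log_2 n)$, so $\Phi(G) = \Omega(1/\log_2 n)$. The main obstacle is exactly this last case analysis: one must ensure that the $G'$-edges leaving $F$ actually cross the $G$-cut, rather than being absorbed by distinguished vertices of split cliques that happen to lie in $S$, and the bookkeeping of split versus full cliques is where care is needed.
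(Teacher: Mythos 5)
Your proof is correct, and the disconnectedness and minimum-degree parts coincide with the paper's (you just spell out explicitly how $\Phi$-sinks in the $G'$-subgraph restricted to $G$ trap the attached clique, which the paper leaves implicit when it says ``Proposition~\ref{prop: g'} immediately implies''). The Cheeger lower bound, however, is argued via a genuinely different decomposition. You classify cliques as full, split, or empty, bound the contribution of split-clique internal cut edges by $\Omega(V_{\text{split}}/t)$ via an elementary worst-case ratio, and then handle $V_{\text{full}}$ with a case split on $n_{\text{split}}$ versus $n_{\text{full}}$, applying $\Phi(G')\geq 1/4$ to the set $F$ of distinguished vertices of full cliques and accounting for those $G'$-edges swallowed by split-clique distinguished vertices inside $S$. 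The paper instead partitions $X$ by whether the clique's distinguished vertex lies in $X$ ($T_1$) or not ($T_2$). The $T_2$ argument is a one-liner (every $T_2$ vertex has its own distinguished vertex as a neighbor in $\bar X$), while the $T_1$ case applies $\Phi(G')\geq 1/4$ to $T_1\cap S$ with a threshold on $m=|T_1\cap S|$ and, when $m>9N/10$, a convexity/rearrangement bound on $\sum n_i(ct-n_i)$; the two ratios are then merged with the mediant inequality. Your approach trades the paper's slick $T_2$ bound and mediant combination for the more transparent full/split bookkeeping; the cost is that one must verify the $G'$-cut edges really cross the $G$-cut (which you handle correctly via the $n_{\text{split}}<n_{\text{full}}/8$ bound), and that ``combining the two contributions'' requires the observation that the split- and full-clique contributions are disjoint edge sets (or, in the first subcase, that a lower bound on the max of $V_{\text{full}}/t$ and $V_{\text{split}}/t$ suffices since $\max\geq\tfrac12(\text{sum})$). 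Both routes are of comparable overall length; neither is strictly simpler.
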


\begin{proof}
First, recalling that $G$ is constructed by appending {\em disjoint} complete graphs to each vertex in $G'$, Proposition $\ref{prop: g'}$ immediately implies a random orientation of $G$ is disconnected with limiting probability at least $1-\frac{1}{e}$. Next, we examine examine the minimum degree and Cheeger constant of $G$. Note that the graph $G$ is on $n=ctN$ vertices, and $\log_2 n = t + \log_2(ct)$.  For $t$ large enough, the minimum degree in the graph (which is $ct-1$) is at least $\frac{c \log_2 n}{2}$ and the maximum degree is $(c+1)t-1 < 2c\log_2n$.  For any subset $X \subseteq V(G)$ with $\vol(X) < \vol(G)/2$, we will show that 
\[
\frac{e(X,\bar{X})}{\vol(X)} = \Omega(\log_2^{-1}{n}).  
\]
Note that since every vertex has degree $\Theta(\log_2{n})$ it suffices to show that for all subsets of cardinality at most $\frac{n}{2}$, 
\[
\frac{e(X,\bar{X})}{|X|} = \Omega(1).  
\]
This is what we shall do. Let $S \subseteq V(G)$ denote the vertices of $G'$ (contained as a subgraph of $G$.)  Let $S_1, \dots, S_N$ denote the vertices contained (respectively) in each of the $N$ cliques. Note $|S_i \cap S| = 1$ for all $i$, as there is a unique distinguished vertex per clique.  Fix a set $X \subseteq V(G)$.  Define the sets
\begin{align*}
S' &= X \cap S \\
T_1 &= \{x \in X: x \in S_i \mbox{ with } (S_i \cap S') \neq \emptyset \mbox{ for some $i \in [N]$}\}\\
T_2 &= \{x \in X: x \in S_i \mbox{ with } (S_i \cap S') = \emptyset  \mbox{ for some $i \in [N]$}\}.
\end{align*}
Note that $S' \subseteq T_1$ and $T_1$ and $T_2$ partition $X$.   
We observe that 
\[
\frac{e(X,\bar{X})}{|X|}=\frac{e(T_1,\bar{X})+ e(T_2, \bar{X})}{|T_1| + |T_2|}. 
\]
By the real number inequality
\[
\frac{a+b}{c+d} \geq \min\left\{\frac{a}{c}, \frac{b}{d} \right\},
\]
valid for positive $a,b,c,d$, it suffices to show that both $\frac{e(T_1,\bar{X})}{|T_1|}$ and $\frac{e(T_2,\bar{X})}{|T_2|}$ are both $\Omega(1)$ (unless one of them is $\frac{0}{0}$ -- note that both of them cannot be since $X$ is non-empty).

We begin by proving that $\frac{e(T_2,\bar{X})}{|T_2|} = \Omega(1)$ so long as $T_2$ is non-empty.  Let $r_i = |S_i \cap T_2|$.  Note that $r_i \leq ct-1$ for every $i$, as the distinguished vertices are {\it not} in $T_2$.  Further note that since $S_i$ is a clique, the $r_i$ vertices in $S_i$ are adjacent to all remaining $ct-r_i$ vertices in the clique which are in $\bar{X}$.  Thus
\[
e(T_2,\bar{X}) = \sum_i r_i(ct - r_i) \geq \sum_i r_i = |T_2|,
\]
so $e(T_2,\bar{X})/|T_2| = \Omega(1)$. \\

It is slightly more complicated to bound $\frac{e(T_1,\bar{X})}{|T_1|}$.  Similarly, we let $n_i = |S_i \cap T_1|$.  Let $m = |T_1 \cap S|$.  Then
\[
e(T_1,\bar{X}) = e(T_1 \cap S, \bar{X} \cap S) + \sum_i n_i(ct - n_i). 
\]
Since $G'$ has $\Phi(G') \geq \frac{1}{4}$, 

$$
e(T_1 \cap S, \bar{X} \cap S) \geq \frac{1}{4} \min\{m, N-m\} \cdot \log_2 N.    
$$
If $m \leq \frac{9N}{10}$ this is sufficient to show $\frac{e(T_1,\bar{X})}{|T_1|} = \Omega(1)$, since $|T_1|=O(m \log_2{N})$ and $e(T_1 \cap S, \bar{X} \cap S) = \Omega(m \log_2{N})$. Otherwise, if $m>\frac{9N}{10}$, without loss of generality $n_1, n_2, \dots, n_m$ are positive.  Consider the function:
\[
f(n_1,n_2,n_3, \dots, n_m) = \sum_i n_i(ct-n_i).  
\]
Note that if $x \geq y$,
\[
(x+1)(ct-(x+1)) + (y-1)(ct-(y-1)) - \left(x(ct-x) + y(ct-y)\right) = 2(y-x-1) < 0.
\]
Thus, for any two arguments of the function $f$, increasing the larger by 1 while decreasing the smaller by 1 decreases the function. Since $f$ is symmetric in its variables, we may relabel them so that $n_1\geq \dots \geq n_m$ and repeatedly apply the above observation to yield:

\[f(n_1,n_2,\dots, n_m) \geq f(ct,ct,\cdots, ct,*,1,1,\dots,1,1),\]
so that the arguments sum to $\sum n_i$ and $1 \leq * \leq ct$.  Since $\sum n_i \leq \frac{n}{2}$ and $m > \frac{9N}{10}$, this means that there are at least $\frac{4N}{10}$ 1's, so
\[
f(n_1,n_2,n_3, \dots, n_m) \geq  f(ct,ct,\cdots, ct,*,1,1,\dots,1,1) \geq \frac{4N}{10} \cdot 1(ct-1) = \Omega(n).
\]
This shows $e(T_1, \bar{X}) = \Omega(n)$, hence $\frac{e(T_1,\bar{X})}{|T_1|} = \Omega(1)$.  Thus we have shown $\Phi(G)=\Omega(\log_2^{-1}{n})$.
 \end{proof}

\section{Proof of Theorem $\ref{mainThm}$} \label{proof}

Our general approach to proving Theorem $\ref{mainThm}$ is based on the observation that a directed graph is strongly connected if and only if every nonempty proper subset $X\subsetneq V(G)$ has an edge both entering and leaving it. Namely, we bound the probability that every connected set $X \subseteq V(G)$ with $\vol(X) \leq \vol(G)/2$  has an edge both entering and leaving it. 

\begin{definition}
For a subset $X$ of vertices let $\mathcal{B}_X$ be the event that $\vol(X) \leq \vol(G)/2$, $X$ is connected in $G$ and $X$ has either no edges oriented into it or out of it.  Note only the third property here is random -- if $X$ does not have one of the first two properties, $\p(\mathcal{B}_X) = 0$ deterministically.  We further define
\[
\mathcal{B}_k = \bigcup_{\substack{X \subseteq V(G)\\|X|=k}} \mathcal{B}_X.
\]
\end{definition}

We estimate $\p(\bigcup_k \mathcal{B}_k) \leq \sum_{k} \p(\mathcal{B}_k)$ by dividing $k$ into two regimes. First we prove that every small subset (where $|X| \leq c\log_2 n$) has an edge entering and leaving: \\

\noindent{\bf Regime 1:} We claim $\sum_{k=1}^{\alpha/2 \log_2 n} \p(\mathcal{B}_k) < \frac{4}{n^{\alpha}}$.  \\

\begin{proof}
We begin by noting that for a given set $X$ of size $k$, there are at most
${k \choose 2}$ edges induced on $X$ and hence, recalling that $\delta$ denotes the minimum degree, there are at least $\delta k - {k \choose 2}$ edges leaving. Note that in this regime, $\delta k - {k \choose 2}>0$ since $k \leq \frac{\alpha}{2}\log_2 n$.  For a given set $X$,  
\[
\p(\mathcal{B}_X) \leq 2^{-\delta  k + {k \choose 2}+1 },
\]
and this gives an estimate
\[
\p(\mathcal{B}_k) \leq {n \choose k} 2^{-\delta k + {k \choose 2} + 1} \equalscolon  b_k.
\]
We note that if $k \leq \frac{\alpha}{2}\log_2 n$,
\begin{align*}
\frac{b_{k+1}}{b_k} &= \frac{{n \choose k+1} 2^{-\delta \cdot (k+1) + {k+1 \choose 2} + 1}}{{n \choose k} 2^{-\delta k + {k \choose 2} + 1}}\\
&= \frac{(n-k)2^{k}}{(k+1)2^{\delta }} \\ &\leq \frac{2^{k}}{n^{\alpha}} \leq \frac{1}{2}.
\end{align*}
  Then 
\[
\sum_{k=1}^{\frac{\alpha}{2}\log_2 n} \p(\mathcal{B}_k) \leq 2\p(\mathcal{B}_1) \leq 2n 2^{1-(1+\alpha) \log_2{n}} = 4n^{-\alpha}.   
\]
\end{proof}

\noindent{\bf Regime 2:}  We claim $\sum_{k \geq \alpha/2 \log_2n}^n \p(\mathcal{B}_k)  \leq \frac{1}{\alpha n^\alpha \log_2{n}}$.

\begin{proof}

For large sets, we must take greater care -- the number of edges that {\it could} be induced in sets is much larger, we utilize our lower bound on the Cheeger constant to ensure many edges leave each set.  Since the number of potential $k$ sets grows large as well, we will restrict attention to counting only connected sets so as to not over count. 

To this end, we will enumerate connected $k$ sets by considering rooted spanning trees in $G$, which we will consider labeled.  The shape of spanning trees, can of course, vary wildly.  For the purposes of this work we will enumerate them by their {\it exposure sequence}.

\begin{definition}
An exposure sequence $\pi=(\pi_1,\pi_2,\dots,\pi_{k-1})$ of a labeled rooted spanning tree on $k$ vertices is determined as follows: newly label the vertices in breadth-first order, with ties broken by the original labeling of the tree. Then, under this new labeling $\pi_i$ is the number of children of vertex $v_i$ in the tree. See Figure \ref{fig:expoEx} for an example.

\end{definition}

Therefore, an exposure sequence of a rooted spanning tree on $k$ vertices is an (ordered) list of $(k-1)$ non-negative integers $(\pi_1, \pi_2, \dots, \pi_{k-1})$ satisfying $\sum_{i \leq j} \pi_i \geq j$ and $\sum_{i=1}^{k-1} \pi_i = k-1$.  A given exposure sequence of $k-1$ numbers uniquely determines the shape of the rooted, spanning tree on $k$ vertices. Since these vertices are labeled in breadth-first order, the $k$th vertex is necessarily a leaf of the tree, so by convention we have $\pi_k=0$. We note that an exposure sequence for a rooted spanning tree on $k$ vertices can be thought of as a staircase walk on the square lattice from $(0,0)$ to $(k-1, k-1)$ which never crosses the diagonal. Namely, the staircase walk corresponding to exposure sequence $\pi$ is formed by taking $\pi_i$ steps east and 1 step north for $i=1,\dots,k-1$ (see Figure $\ref{fig:expoEx}$). Thus, counting all possible exposure sequences is equivalent to counting all Dyck paths on the square lattice, which is given by the Catalan numbers: $c_{k-1}=\frac{1}{k} {2(k-1) \choose k-1}$.

\begin{figure}
\centering
\begin{tikzpicture}[scale=0.6]
    \tikzstyle{every node}=[circle,draw]
    \node[ultra thick] {1}
        child { 
        node {2} 
        		child {
        		node {4}
		child{ node{6}}
		child{ node{5}}
		child {
		node{7}
			child{node{8}}
			}
		}
	child {node {3}}
        }
    ;
\end{tikzpicture}
\hspace{3cm}
\begin{tikzpicture} [scale=0.59]
\draw[step=1cm,gray,very thin] (0,0) grid (7,7);
\draw[thin, dashed] (0,0) -- (7,7);
\draw[ultra thick, -] (0,0) -- (1,0);
\draw[ultra thick, -] (1,0) -- (1,1);
\draw[ultra thick, -] (1,1) -- (3,1);
\draw[ultra thick, -] (3,1) -- (3,3);
\draw[ultra thick, -] (3,3) -- (6,3);
\draw[ultra thick, -] (6,3) -- (6,6);
\draw[ultra thick, -] (6,6) -- (7,6);
\draw[ultra thick, -] (7,6) -- (7,7);
\end{tikzpicture}
\caption{Left: a breadth-first vertex labeling of a rooted tree yielding exposure sequence $\pi=(1,2,0,3,0,0,1)$. Right: the staircase walk corresponding to exposure sequence $\pi=(1,2,0,3,0,0,1)$. }
\label{fig:expoEx}
\end{figure}
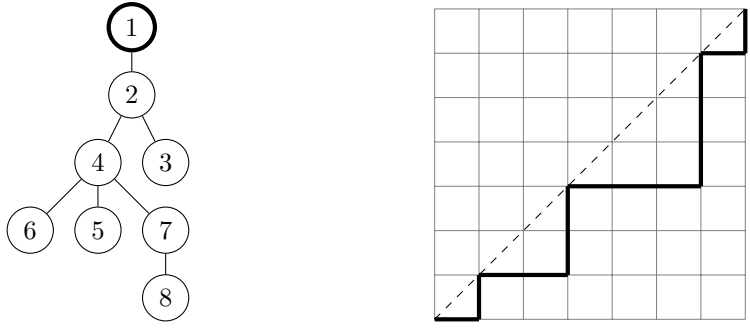

We will enumerate all of the rooted subtrees in $G$ on $k$ vertices by their exposure sequence. Our task now is to bound the following sum: 

\begin{multline}
\p(\mathcal{B}_k) \leq \sum_{\pi = (\pi_1, \pi_2 \dots, \pi_{k-1})} \sum_{v_1 \in V(G)} \sum_{\substack{\{v_2,v_3,\dots,v_{1+\pi_1}\} \\ \in {N(v_1) \choose \pi_1}}}\\* \sum_{\substack{\{v_{2+\pi_1}, v_{3+ \pi_1}, \cdots, v_{\pi_1+\pi_2+1}\} \\ \in {N(v_2) \choose \pi_2}}}
\cdots \sum_{\substack{\{v_{k-\pi_{k-1}+1}, \dots, v_{k}\}\\ \in {N(v_{k-1}) \choose \pi_{k-1}}}} \p(\mathcal{B}_X), \label{eqn:telescope}
\end{multline}    
where $X = \{v_1,\dots, v_k\}$ and ${N(v_i) \choose \pi_i}$ denotes the set of all sets of $\pi_i$ vertices adjacent to $v_i$ in the original graph $G$.  For any $X$ which is connected in the original graph,
\[
\p(\mathcal{B}_X) = \begin{cases} 0 & \mbox{if }\vol(X) > \frac{\vol(G)}{2} \\ 
2^{1-e(X,\bar{X})} & \mbox{if }\vol(X) \leq \frac{\vol(G)}{2}
\end{cases}.
\]
In the second case, 
\[
\p(\mathcal{B}_X) \leq 2^{1-e(X,\bar{X})} \leq 2^{1-\Phi \vol(X)} = 2^{1 - \Phi \sum_{v_i \in X}  \deg(v_i)}.
\]
Since this bounds $\p(\mathcal{B}_X)$  above by a positive quantity (and $\p(\mathcal{B}_X)$ is otherwise zero), the inequality
\begin{equation}
\p(\mathcal{B}_X) \leq 2^{1-\Phi \sum_{v_i \in X} \deg(v_i)}  \label{eqn:ineq}
\end{equation}
holds for every $X$.  We now use (\ref{eqn:ineq}) to bound the right hand side of (\ref{eqn:telescope}).  We wish to collapse a term of the form
\[
\sum_{{N(v_j) \choose \pi_j}} 2^{1-\Phi \deg(v_j)},
\]
as (after having already bounded each of the summands for $v_i$ where $i > j$) we will have ensured that the summand is independent of the $\pi_j$ vertices chosen.  Thus,
\[
\sum_{{N(v_j) \choose \pi_j}} 2^{1-\Phi \deg(v_j)} = {\deg(v_j) \choose \pi_j} 2^{1-\Phi \deg(v_j)}.
\]
We will give an upper bound of this term which is independent of $v_j$, depending only on $\pi_j$ and $\delta $, and this will allow us to continue collapsing the sum (\ref{eqn:telescope}). We find three different upper bounds for this term for the cases when $\pi_j=0$, $\pi_j=1$, or $\pi_j>1$. \\ 

\noindent {\bf Case 1}: $\pi_j=0$.

If $v_j$ is a leaf of the embedded spanning tree (which corresponds to $\pi_j = 0$), we
simply bound 
\[
2^{1-\Phi \deg(v_j)} \leq 2^{1-\Phi \cdot (1+\alpha) \log_2{n}}.  
\]

\noindent {\bf Case 2}: $\pi_j>1$.

Since the terms we are interested in have the general form:  ${\deg(v_j) \choose \pi_j} 2^{1-\Phi \deg(v_j)}$, we investigate the associated sequence defined by fixing $\pi_j$ and varying $\deg(v_j)$. In general, let 
\[
\kappa_{s,t} = {s \choose t} 2^{1-\Phi s},
\]
so that the terms appearing above are $\kappa_{\deg(v_j),\pi_j}$.  
Then for a fixed $t$ and varying $s$, the sequence $\kappa_{s,t}$ is unimodal.  We have that
\[
\frac{\kappa_{s,t}}{\kappa_{s+1,t}} = \left(1 - \frac{t}{s+1}\right) 2^{\Phi}.
\]
Thus the maximum of $\kappa_{s,t}$, for a fixed $t$, is achieved by the smallest $s$ such that $\kappa_{s+1,t} < \kappa_{s,t}$, yielding
\[
\frac{t}{s+1} <  1 - 2^{-\Phi},
\]
or equivalently 
\[
(s+1) > \frac{t}{1-2^{-\Phi}}. 
\]
Thus the maximum of $\kappa_{s,t}$ occurs when 
\[
s_{\max(t)}  = \left \lfloor \frac{t}{1 - 2^{-\Phi}} \right\rfloor.
\]
Indeed, extending the binomial coefficients to the reals in the usual way, the floor function can be dropped. Recalling that $\Phi,s\geq0$, for fixed $t$ we have:
\begin{align}
\kappa_{s,t} \leq {(1-2^{-\Phi})^{-1} t \choose t}2^{1-\Phi \cdot (1-2^{-\Phi})^{-1}t}.  \label{eq:binomreal}
\end{align}
Next, we use the entropy bound:

\[
{n \choose k} \leq \frac{n^n}{k^k(n-k)^{n-k}} = 2^{nH(k/n)},
\]
where $H(q)=-q\log_2{q}-(1-q)\log_2(1-q)$ is the binary entropy function. \\

Applying this bound to the binomial coefficient in (\ref{eq:binomreal}) yields:

\begin{align}
\log_2{{(1-2^{-\Phi})^{-1} t \choose t}} &\leq {(1-2^{-\Phi})^{-1} tH(1-2^{-\Phi})} \nonumber\\
& = {-(1-2^{-\Phi})^{-1} t \left[ (1-2^{-\Phi}) \log_2(1 - 2^{-\Phi}) + 2^{-\Phi} \log_2 (2^{-\Phi}) \right]} \nonumber \\
& = {t \left[ -\log_2(1 - 2^{-\Phi}) + \frac{\Phi}{2^{\Phi}-1} \right]}. \nonumber
\end{align}

Combining this upper bound with $(\ref{eq:binomreal})$ and simplifying, we have that

\begin{align}
\kappa_{s,t} &\leq 2^{1+t (-\log_2(1 - 2^{-\Phi})-\Phi)}. \label{eq:entbd} 
\end{align}

We will now provide constant upper bounds on the terms involving $\Phi$ in the exponent of (\ref{eq:entbd}). Setting $f(x)=-\log_2(1-2^{-x})$, we have $f'(x) = \frac{-1}{2^x-1}$ and so for $x>0$, 
\[
f(x)=f(1)+\int_1^x f'(t) \, dt = 1 + \int_x^1 \frac{1}{2^t -1} \, dt. 
\]
Since $1 + x \ln 2 \leq e^{x \ln 2}=2^x$, we have that for $x>0$, 
\[ 
f(x) \leq 1 + \int_x^1 \frac{1}{t \ln2} \, dt = 1-\log_2(x). 
\]

We may use this to bound (\ref{eq:entbd}), yielding
\begin{align*}
\kappa_{s,t} &<2^{1-t\log_2(\Phi) + t}.
\end{align*}
Although we will only apply this when $\pi_j>1$, this gives the general bound, good for any $\deg(v_j), \pi_j$ that 
\[
{\deg(v_j) \choose \pi_j}2^{1-\Phi \deg(v_j)} \leq 2^{1-\pi_j \log_2(\Phi) + \pi_j}.
\]

\noindent {\bf Case 3}: $\pi_j=1$.

In this case, the previous bound does not suffice for our purposes.  Here, we improve the bound by observing that our conditions imply that $\deg(v_j) > s_{max}(t).$   Indeed, our  condition that $\Phi > \xi \frac{\log_2\log_2 n}{\log_2 n}$ implies that for  $n$ sufficiently large, $(1+\alpha)\log_2{n} > (1-2^{-\Phi})^{-1}$.    Hence we are interested in $\kappa_{\deg(v_j),\pi_j}$ and by the unimodality of the $\kappa_{s,t}$ for $t$ fixed, we can derive the bound:

\begin{align}
{\deg(v_j) \choose \pi_j} 2^{1-\Phi \deg(v_j)} &< { (1+\alpha) \log_2 n \choose \pi_j} 2^{1-\Phi \cdot (1+\alpha) \log_2n} \nonumber \\
&< ((1+\alpha)\log_2 n)^{\pi_j} 2^{1-\Phi \cdot (1+\alpha)\log_2n} \nonumber \\
&= 2^{1+\pi_j \left[\log_2(1+\alpha) + \log_2\log_2 n\right] - \Phi \cdot (1+\alpha)\log_2{n}}, \label{eq:smallpibd}  
\end{align}
which for $\pi_j = 1$ simplifies to
\[
{\deg(v_j) \choose 1} 2^{1-\Phi\deg(v_j)} < 2^{1+ [\log_2(1+\alpha)+\log_2\log_2 n] - \Phi \cdot (1+\alpha) \log_2 n}.
\]

Collecting our results from Cases 1, 2, and 3, we have established the following:

\begin{align}
\sum_{{N(v_j) \choose \pi_j}} 2^{1-\Phi \deg(v_j)} &= {\deg(v_j) \choose \pi_j} 2^{1-\Phi \deg(v_j)} \nonumber \\&\leq \begin{cases} 
2^{1-\Phi \cdot (1+\alpha)\log_2 n} & \mbox{if $\pi_j = 0$}\\
2^{1+\left[\log_2(1+\alpha) + \log_2\log_2 n\right] - \Phi \cdot (1+\alpha)\log_2{n}} & \mbox{if $\pi_j = 1$} \\
2^{1+\pi_j \log_2(1/\Phi) + \pi_j} & \mbox{if $\pi_j > 1$}
\end{cases}. \label{genbd}
\end{align}

Before we collapse the sum (\ref{eqn:telescope}) using (\ref{genbd}), we make a few simple combinatorial observations concerning exposure sequences of rooted spanning trees. Recalling that a degree of a vertex in the spanning tree is $\pi_1$ for $v_1$, and $\pi_i + 1$ for $v_i$, we define the following: 

\begin{definition}
For an exposure sequence $\pi = (\pi_1, \dots, \pi_{k-1})$, let $$\ell(\pi) = 1 + |\{j \leq k-1: \pi_j=0\}|$$ denote the number of leaves of the spanning tree described by the sequence and we let 
\[ 
p(\pi) = |\{j \leq k-1: \pi_j = 1\}|.
\]
\end{definition}

\begin{lemma} \label{combobs}
For any exposure sequence $\pi$, we have
\begin{align}
\bullet& \ p(\pi)+\ell(\pi)\geq\frac{k}{2}, \nonumber \\
\bullet& \sum_{j: \pi_j\geq 2}\pi_j<k-p(\pi). \nonumber
\end{align}
\end{lemma}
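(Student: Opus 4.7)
The plan is to prove both inequalities from the single identity
\[
\sum_{j=1}^{k-1} \pi_j = k-1,
\]
which holds for every exposure sequence by definition. I will partition the indices $\{1,\dots,k-1\}$ according to whether $\pi_j=0$, $\pi_j=1$, or $\pi_j\geq 2$, and track the sizes of these three sets together with the sum each contributes.

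For the second bullet, I simply split the identity into the three parts above: the $\pi_j=0$ indices contribute $0$, the $\pi_j=1$ indices contribute exactly $p(\pi)$, and the $\pi_j\geq 2$ indices contribute $\sum_{j:\pi_j\geq 2}\pi_j$. This gives the exact equality
\[
p(\pi) + \sum_{j:\pi_j\geq 2} \pi_j = k-1,
\]
from which $\sum_{j:\pi_j\geq 2}\pi_j = k-1-p(\pi) < k-p(\pi)$ is immediate.

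For the first bullet, let $m = |\{j\leq k-1 : \pi_j \geq 2\}|$. Since each such $\pi_j$ contributes at least $2$, the identity above yields $k-1-p(\pi) \geq 2m$, i.e.\ $m \leq (k-1-p(\pi))/2$. On the other hand, the three pieces of the partition have sizes that add to $k-1$, so the number of zero entries equals $k-1-p(\pi)-m$, hence
\[
\ell(\pi) - 1 = k-1-p(\pi)-m \geq k-1-p(\pi) - \frac{k-1-p(\pi)}{2} = \frac{k-1-p(\pi)}{2}.
\]
Adding $p(\pi)+1$ to both sides gives
\[
p(\pi) + \ell(\pi) \geq 1 + \frac{k-1+p(\pi)}{2} \geq \frac{k+1}{2} \geq \frac{k}{2},
\]
which is the desired bound.

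There is no real obstacle here; the lemma is a bookkeeping statement about the composition $k-1 = \sum \pi_j$ once one realizes that the only nontrivial input is the crude bound $\pi_j \geq 2$ on the ``large'' indices. The only minor care needed is the edge case $k=1$, where the exposure sequence is empty, $p(\pi)=0$, $\ell(\pi)=1$, and both inequalities hold trivially.
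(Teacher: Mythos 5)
Your proof is correct and follows essentially the same bookkeeping as the paper: the second bullet is proved by the identical three-way split of $\sum\pi_j=k-1$, and your first bullet is simply a direct (contrapositive-free) rephrasing of the paper's contradiction argument—both rely on the observation that indices with $\pi_j\ge 2$ cannot be too numerous because they each contribute at least $2$ to a sum that equals $k-1$.
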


\begin{proof}
For the first observation, note that if $p(\pi)+\ell(\pi)<\frac{k}{2}$, then there are at least $\frac{k}{2}$ terms in $\pi$ that are at least 2, yielding the contradiction:
\[
k=(k/2)\cdot 2\leq\sum_{i=1}^{k-1}\pi_i=k-1.
\]
And, for the second observation:
\begin{align}
k-1=\sum_{i=1}^{k-1}\pi_i&=\sum_{j: \pi_j\geq 2}\pi_j+\sum_{j: \pi_j= 1}\pi_j+\sum_{j: \pi_j=0}\pi_j\nonumber \\
&=\sum_{j: \pi_j\geq 2}\pi_j+p(\pi). \nonumber 
\end{align} 
\end{proof}

We now proceed to bound  $\p(\mathcal{B}_k)$ (\ref{eqn:telescope}). We will take logarithm here for readability so that every term would not appear in the exponent -- this should be viewed most naturally by exponentiating both sides. Iteratively applying (\ref{genbd}), we obtain that 
for a fixed $\pi = (\pi_1, \cdots, \pi_{k-1})$ and $v_1 \in V(G)$ 
\begin{align}
\log_2\p(\mathcal{B}_k)& \leq \log_2\left(\sum_{\substack{\{v_2,v_3,\dots,v_{1+\pi_1}\} \\ \in {N(v_1) \choose \pi_1}}}  \sum_{\substack{\{v_{2+\pi_1}, v_{3+ \pi_1}, \cdots, v_{\pi_1+\pi_2+1}\} \\ \in {N(v_2) \choose \pi_2}}}
\cdots \sum_{\substack{\{v_{k-\pi_{k-1}+1}, \dots, v_{k}\}\\ \in {N(v_{k-1}) \choose \pi_{k-1}}}} \p(\mathcal{B}_X)\right) \nonumber \\
&\leq \left[ \sum_{j: \pi_j \geq 2} 1+\pi_j \log_2(1/\Phi) + \pi_j \right]  + p(\pi)[1+\log_2(1+\alpha)+ \log_2\log_2n] \nonumber \\ &\hspace{1in}- (p(\pi) + \ell(\pi)) \Phi {\cdot} (1+\alpha)\log_2{n}+\ell(\pi)  \nonumber \\
&= \left[ \sum_{j: \pi_j \geq 2} \pi_j \log_2(1/\Phi) + \pi_j \right]  + p(\pi)[\log_2(1+\alpha)+ \log_2\log_2n] \nonumber \\ &\hspace{1in}- (p(\pi) + \ell(\pi)) \Phi {\cdot} (1+\alpha)\log_2{n}+k.  \nonumber 
\end{align}

Continuing, we apply Lemma \ref{combobs} to yield
\begin{align}
& \left[ \sum_{j: \pi_j \geq 2} \pi_j \log_2(1/\Phi) + \pi_j \right]  + p(\pi)[\log_2(1+\alpha)+ \log_2\log_2n] \nonumber \\ &\hspace{1in}- (p(\pi) + \ell(\pi)) \Phi {\cdot} (1+\alpha)\log_2{n}+k  \nonumber \\
& \leq (k - p(\pi))\left( \log_2(1/\Phi) + 1\right)+ p(\pi)[\log_2(1+\alpha)+ \log_2\log_2n] \nonumber \\ &
\hspace{1in} - \frac{k}{2} {\cdot} \Phi {\cdot} (1+\alpha) \log_2 n +k.\nonumber 
\end{align}

Next, using the fact that $\Phi > \xi \frac{\log_2 \log_2 n}{\log_2 n}$ for some (large) constant $\xi$, we obtain
\begin{align}
& (k - p(\pi))\left( \log_2(1/\Phi) + 1\right)+ p(\pi)[\log_2(1+\alpha)+ \log_2\log_2n] \nonumber \\ &
\hspace{1in} - \frac{k}{2} {\cdot} \Phi {\cdot} (1+\alpha) \log_2 n +k\nonumber \\
&< (k-p(\pi))\log_2{\log_2{n}}+(k-p(\pi))+p(\pi)\log_2{(1+\alpha)}\nonumber \\ & \hspace{1in} +p(\pi)\log_2{\log_2{n}} -\xi\frac{k}{2}(1+\alpha)\log_2{\log_2{n}}+k \nonumber \\
&\leq k\left(\log_2{\log_2{n}}\left[1-\frac{\xi}{2}(1+\alpha)\right]+\left(2+\log_2(1+\alpha)\right)\right).\label{eqn:currbd}
\end{align}

Finally, for $\xi>4$ and $n$ sufficiently large we have:
\begin{align}
 & k\left(\log_2{\log_2{n}}\left[1-\frac{\xi}{2}(1+\alpha)\right]+\left(2+\log_2(1+\alpha)\right)\right) \nonumber \\
 &\leq  k\left(\log_2{\log_2{n}}\left[1-2(1+\alpha)\right]\right) \nonumber \\
 &= -k {\cdot} \frac{\alpha\log_2{\log_2{n}}}{2}\left(2\alpha^{-1}+4\right) \nonumber \\
 &\leq -k(2\alpha^{-1}+4). \nonumber
\end{align}

Therefore, by assuming $n$ and $\xi$ are large enough, for any fixed $\pi$ and $v_1 \in V(G)$ we have that:

\begin{align*}
&\sum_{\substack{\{v_2,v_3,\dots,v_{1+\pi_1}\} \\ \in {N(v_1) \choose \pi_1}}}  \sum_{\substack{\{v_{2+\pi_1}, v_{3+ \pi_1}, \cdots, v_{\pi_1+\pi_2+1}\} \\ \in {N(v_2) \choose \pi_2}}}
\cdots \sum_{\substack{\{v_{k-\pi_{k-1}+1}, \dots, v_{k}\}\\ \in {N(v_{k-1}) \choose \pi_{k-1}}}} \p(\mathcal{B}_X) \\
&\hspace{1.5in} \leq 2^{-(2\alpha^{-1} + 4)k }.
\end{align*}

Using the above bound and recalling that there are $c_{k-1}=k^{-1}{2(k-1) \choose k-1}$ many exposure sequences, we now bound all of (\ref{eqn:telescope}) as:

 \begin{align*}
 \p(\mathcal{B}_k) &\leq \sum_{\pi = (\pi_1, \pi_2 \dots, \pi_{k-1})} \sum_{v_1 \in V(G)} \sum_{\substack{\{v_2,v_3,\dots,v_{1+\pi_1}\} \\ \in {N(v_1) \choose \pi_1}}} \\ &\hspace{1in}\sum_{\substack{\{v_{2+\pi_1}, v_{3+ \pi_1}, \cdots, v_{\pi_1+\pi_2+1}\} \\ \in {N(v_2) \choose \pi_2}}}
 \cdots \sum_{\substack{\{v_{k-\pi_{k-1}+1}, \dots, v_{k}\}\\ \in {N(v_{k-1}) \choose \pi_{k-1}}}} \p(\mathcal{B}_X) \\
 &\leq    \sum_{\pi = (\pi_1, \pi_2 \dots, \pi_{k-1})} \sum_{v_1 \in V(G)} 2^{-(2\alpha^{-1}+4)k} \\
 &\leq nk^{-1}{2(k-1) \choose k-1} 2^{-(2\alpha^{-1}+4)k}\\
 &\leq nk^{-1}4^{k-1}2^{-(2\alpha^{-1}+4)k} \\
 &= 2^{\log_2{n}-\left(\log_2{k}+(2\alpha^{-1}+2)k+2\right) }\\
 &\leq 2^{-\left(\log_2{k}+2k+2\right) },
\end{align*}   
where, in the last inequality, we used that  $k \geq \frac{\alpha}{2} \log_2 n$.  Thus:
\begin{align*}
\sum_{k=\frac{\alpha}{2}\log_2 n}^{n} \p(\mathcal{B}_k) &\leq \sum_{k=\frac{\alpha}{2}\log_2 n}^{n} 2^{-\left(\log_2{k}+2k+2\right) } \\ &\leq 2\cdot 2^{-\left(\log_2({\frac{\alpha}{2}\log_2{n}})+\alpha\log_2{n}+2\right) }\\ &= \frac{1}{\alpha n^\alpha\log_2{n}}.
\end{align*}

This completes our estimate for Regime 2. 

\end{proof}
Finally, combining the estimates we derived in each regime, we see that
\[
\p\left(\bigcup_{k=1}^n \mathcal{B}_k\right) \leq \sum_{k=1}^{n} \p(\mathcal{B}_k) \leq \frac{1+4\alpha \log_2{n}}{\alpha n^\alpha \log_2{n}},
\]
and thus with probability at least $1-\frac{1+4\alpha \log_2{n}}{\alpha n^\alpha \log_2{n}}=1-o(1)$, a random orientation of $G$ is strongly connected, completing our proof of Theorem $\ref{mainThm}$.  \\
\qed \\

\noindent {\bf Acknowledgment}: The authors would like to thank Fan Chung for originating this problem and for helpful comments that improved subsequent drafts of the paper. We are also grateful for constructive input from anonymous referees, including their suggestion of a cleaner bound that simplified the exposition. This research was supported in part by NSF grant DMS-1427526, ``The Rocky Mountain - Great Plains Graduate Research Workshop in Combinatorics".

\end{document}